\begin{document}
\newtheorem{theorem}{Theorem}
\newtheorem{lemma}[theorem]{Lemma}
\def\\{\cr}
\newcommand{\seqnum}[1]{\href{https://oeis.org/#1}{\rm \underline{#1}}}

\begin{center}
\vskip 1cm{\LARGE\bf 
Binary Recurrences for which Powers of Two are Discriminating Moduli
}
\vskip 1cm
\end{center}

\begin{minipage}[t]{0.5\textwidth}
\begin{center}
Adriaan A. de Clercq\\
Department of Mathematics and Applied Mathematics, University of Pretoria\\
Private Bag X20, Hatfield 0028\\ 
South Africa\\
\href{mailto:adriaandeclercq1998@gmail.com}{\tt
adriaandeclercq1998@gmail.com}
\\

\ \\
Florian Luca\\
School of Maths\\ Wits University\\1 Jan Smuts Avenue,
Braamfontein 2000,
Johannesburg\\
South Africa\\
Centro de Ciencias Matemat\'aticas, UNAM, Morelia\\ Mexico\\
\href{mailto:florian.luca@wits.ac.za}{\tt florian.luca@wits.ac.za}
\\
\ \\
Lilit Martirosyan\\
Department of Mathematics and Statistics\\
University of North Carolina, Wilmington\\
601 South College Road\\ Wilmington, NC 28403-5970\\ USA\\
\href{mailto:martirosyanl@uncw.edu}{\tt martirosyanl@uncw.edu}\\
\end{center}
\end{minipage}
\begin{minipage}[t]{0.5\textwidth}
\begin{center}
Maria Matthis\\
Department of Mathematics\\
Katharineum zu L\"ubeck\\
K\"onigsstra{\ss}e 27-31, 23552 L\"ubeck\\
Germany\\
\href{mailto:matthima@katharineum.de}{\tt matthima@katharineum.de}
\\
\ \\
Pieter Moree\\
Max-Planck-Institut f\"ur Mathematik\\
Vivatsgasse 7, D-53111 Bonn\\
Germany\\ 
\href{mailto:moree@mpim-bonn.mpg.de}{\tt moree@mpim-bonn.mpg.de} \\
\ \\
Max A. Stoumen\\
Department of Mathematics and Statistics \\
University of North Carolina, Wilmington\\
601 South College Road\\ Wilmington, NC 28403-5970\\
USA\\
\href{mailto:mas5084@uncw.edu}{\tt mas5084@uncw.edu} \\
\ \\
Melvin Wei\ss\\
Department of Mathematics \\
Universit\"at Bonn\\
Endenicher Allee 60, 53115 Bonn \\
Germany\\
\href{mailto:melvin@weissprivat.de}{\tt melvin@weissprivat.de} \\

\end{center}
\end{minipage}
\begin{abstract} Given a sequence 
${\bf w}=(w_n)_{n\geq 0}$ of distinct positive integers $w_0 , w_1, w_2, \ldots$ and any positive integer $n$, we define the discriminator function $\mathcal{D}_{\bf w}(n)$ to be the smallest positive integer $m$ such 
that $w_0,\ldots, w_{n-1}$ are pairwise incongruent modulo $m$. In this paper,  we classify all binary recurrent sequences $\bf w$ consisting of different integer terms such that $\mathcal{D}_{\bf w}(2^e)=2^e$  for every $e\geq 1$. For all
of these sequences 
it is expected that one can actually give a fairly simple description
of
$\mathcal{D}_{\bf w}(n)$ for every $n\ge 1$. For one infinite 
family of such sequences this
has been done by Faye, Luca, and Moree, and for another by Ciolan and Moree.
\end{abstract}

\section{Introduction}

The {\it discriminator sequence} of a sequence ${\bf w}=(w_n)_{n\ge 0}$
of distinct integers is the sequence 
$({\mathcal D}_{\bf w}(n))_{n\ge 0}$
given by
$$
{\mathcal D}_{\bf w}(n)=\min\{m\ge 1: w_0,\ldots,w_{n-1}~{\text{\rm are~pairwise~distinct~modulo}}~m\}.
$$
In other words,
${\mathcal D}_{\bf w}(n)$
is the smallest
integer $m$ that allows one to discriminate (tell
apart) the integers $w_0,\ldots, w_{n-1}$  on reducing them modulo $m$.
If not all integers are distinct, but say $w_0,\ldots,w_k,$ then we
can define ${\mathcal D}_{\bf w}(j)$ for $j=1,\ldots,k+1$.
Obviously ${\mathcal D}_{\bf w}(n)$ is non-decreasing as a function of $n$.
Note that since $w_0,\ldots,w_{n-1}$ are in $n$ distinct residue classes modulo ${\mathcal D}_{\bf w}(n)$, we must have ${\mathcal D}_{\bf w}(n)\ge n$.
On the other hand clearly
$${\mathcal D}_{\bf w}(n)\le \max\{w_0,\ldots,w_{n-1}\}
-\min\{w_0,\ldots,w_{n-1}\}+1.$$
The main problem is to give an easy description or 
characterization of ${\mathcal D}_{\bf w}(n)$ (in many cases such a characterization does not seem to exist). 

If $w_j$ is a polynomial in
$j,$ the behavior of the discriminator is fairly well
understood. See Moree \cite{PM},
Zieve \cite{Zieve}, and the references 
therein.  

An intensively studied class of
sequences is that of binary 
recurrent sequences, cf.\ the book by 
Everest et al.\ \cite{E}. For a generic
binary recurrent sequence there is 
currently no meaningful 
characterization of 
its discriminator. An example
is provided by the discriminator for the Fibonacci sequence (see Table 1).
However, if we have 
\begin{equation}
\label{condition}
\mathcal{D}_{\bf w}(2^e)=2^e\quad {\text{\rm for every}}\, \, e\geq 1, 
\end{equation}
the discriminator behavior tends to
be much simpler. It is easy to see that then $\mathcal{D}_w(n)<2n$.  
This allows one to exclude many potential discriminator values. Indeed, in
general discriminator characterizations  for 
a fixed $n$ proceed by excluding  all
integers different from $\mathcal{D}_{\bf w}(n)$ as values.
If \eqref{condition}  holds, then typically many powers of two
occur as values (cf.\ Table 2). 
All known binary recurrent discriminators 
satisfy \eqref{condition} and fall into two families
described below.
Thus, it is natural to ask for a classification
of all binary recurrent sequences $(w_n)_{n\geq 0}$ such that \eqref{condition} is satisfied. Note that for any such sequence the terms $w_n$ must be
distinct.

Our main result completely answers this question.
\begin{theorem}
\label{main}
For integers $w_0,w_1,p$ and $q$, let $(w_n)_{n\ge 0}$ be 
the sequence defined by
\begin{equation}
\label{eq1}
w_{n+2}=pw_{n+1}+qw_n\quad {\text{\rm for~all}}\, \, n\ge 0.
\end{equation}

\noindent If $w_0+w_1$ is even and $k\ge 1$, then 
$\#\{w_n\,({\rm mod~}2^k): 0\le n\le 2^k-1\}<2^k$.

\noindent If $(p\bmod 4,q\bmod 4)=(2,3)$ and $w_0+w_1$ is odd, then ${\mathcal D}_{\bf w}(2^k)=2^k$ for every 
$k\ge 1$. 

\noindent If $(p\bmod 4,q\bmod 4)\neq (2,3)$ and $k\ge 3$, then $\#\{w_n\,({\rm mod~}2^k): 0\le n\le 2^k-1\}<2^k$.
\end{theorem}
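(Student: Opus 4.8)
\medskip
\noindent\emph{Proof strategy.}
The plan is to work with the companion matrix $A=\left(\begin{smallmatrix}p&q\\ 1&0\end{smallmatrix}\right)$, so that $\left(\begin{smallmatrix}w_{n+1}\\ w_n\end{smallmatrix}\right)=A^{n}\left(\begin{smallmatrix}w_1\\ w_0\end{smallmatrix}\right)$ for all $n\ge 0$, and to use the elementary fact that, since $w_0,\dots,w_{2^k-1}$ are $2^k$ integers and there are exactly $2^k$ residues modulo $2^k$, the equality $\#\{w_n\bmod 2^k:0\le n\le 2^k-1\}=2^k$ holds iff $w_0,\dots,w_{2^k-1}$ is a complete residue system (CRS) modulo $2^k$; that is, iff $\mathcal D_{\bf w}(2^k)=2^k$. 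Two soft remarks power the negative direction: (i) if $\#\{w_n\bmod 2^{k_0}:n\ge 0\}<2^{k_0}$ for some $k_0$, then reducing any CRS modulo $2^k$ (with $k\ge k_0$) down to $2^{k_0}$ would meet every class mod $2^{k_0}$, which is impossible, so the count is $<2^k$ for all $k\ge k_0$; (ii) a CRS modulo $2^k$ has exactly $2^{k-1}$ odd members, so whenever the number of odd terms among $w_0,\dots,w_{2^k-1}$ is not $2^{k-1}$ the CRS property fails.

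For the negative statements I would carry out a finite case analysis on $(p\bmod 2,q\bmod 2)$ and on the parities of $w_0$ and $w_1$, using that $(w_n\bmod 2)$ is determined by $A\bmod 2$ and is either eventually constant, or purely $2$-periodic, or purely $3$-periodic with exactly two odd terms per period. If $w_0+w_1$ is even, then in every subcase the parity sequence is all-even, all-odd, or of the shape $(1,1,0,0,\dots)$ or $3$-periodic, and (ii) gives $\#\{w_n\bmod 2^k:0\le n\le 2^k-1\}<2^k$ for every $k\ge 1$ apart from a short explicit list of small configurations that are checked by hand. If $w_0+w_1$ is odd and $(p\bmod 4,q\bmod 4)\ne(2,3)$: when $p$ is odd, or $p$ is even with $q$ even, the parity sequence is unbalanced and (ii) applies for every $k\ge 2$; and when $p$ is even with $q$ odd — the only balanced case — a one-line computation of $A^2\bmod 4$ in each of the classes $(0,1),(0,3),(2,1)$ shows $A^{4}\equiv I\pmod 4$ while the $A$-orbit of $\left(\begin{smallmatrix}w_1\\ w_0\end{smallmatrix}\right)$ omits a class modulo $4$, so $\#\{w_n\bmod 4:n\ge 0\}<4$ and (i) yields the conclusion for all $k\ge 3$.

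For the positive statement, assume $(p\bmod 4,q\bmod 4)=(2,3)$ and $w_0+w_1$ odd, and induct on $k$; the case $k=1$ is just $w_0\not\equiv w_1\pmod 2$. The inductive step is forced by the congruence
$$w_{m+2^{k-1}}\equiv w_m+2^{k-1}\pmod{2^{k}}\qquad(m\ge 0,\ k\ge 2),$$
because then $w_m$ and $w_{m+2^{k-1}}$ are the two lifts modulo $2^k$ of $w_m\bmod 2^{k-1}$, so a CRS modulo $2^{k-1}$ among $w_0,\dots,w_{2^{k-1}-1}$ spreads to a CRS modulo $2^k$ among $w_0,\dots,w_{2^k-1}$. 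To prove the congruence I would first record the parity invariant $w_{m+1}+w_{m+2}=(p+1)w_{m+1}+qw_m\equiv w_{m+1}+w_m\pmod 2$ (as $p+1$ and $q$ are odd), so that $w_m+w_{m+1}\equiv 1\pmod 2$ for all $m$; then set $D_j:=A^{2^{j}}-I$ and note $D_{j+1}=(I+D_j)^{2}-I=D_j(D_j+2I)$. Since $(p\bmod 4,q\bmod 4)=(2,3)$ forces $A^2-I\equiv 2\left(\begin{smallmatrix}1&1\\ 1&1\end{smallmatrix}\right)\pmod 4$, an induction gives $D_j=2^{j}C_j$ with $C_j$ an integer matrix all of whose entries are odd — the inductive step uses $C_1^{2}\equiv0\pmod 2$, i.e.\ that the all-ones matrix squares to zero mod $2$, which preserves the rank-one-mod-$2$ shape of $C_j$. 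Consequently the bottom entry of $D_{k-1}\left(\begin{smallmatrix}w_{m+1}\\ w_m\end{smallmatrix}\right)=2^{k-1}C_{k-1}\left(\begin{smallmatrix}w_{m+1}\\ w_m\end{smallmatrix}\right)$ — which equals $w_{m+2^{k-1}}-w_m$ — is $2^{k-1}$ times an integer that is $\equiv w_{m+1}+w_m\equiv 1\pmod 2$, hence $v_2(w_{m+2^{k-1}}-w_m)=k-1$, which is the asserted congruence.

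I expect the main obstacle to be this last $2$-adic computation: establishing, uniformly over all $(p,q)$ with $(p\bmod 4,q\bmod 4)=(2,3)$, that $A^{2^{j}}-I$ acquires exactly one extra factor of $2$ with each squaring — no fewer and no more — so that terms at distance $2^{k-1}$ differ by precisely $2^{k-1}$ modulo $2^k$. The subtle point is that it is exactly the rank-one-modulo-$2$ shape of $A^2-I$ that singles out $(2,3)$ from the neighbouring classes $(0,1),(2,1),\dots$, in which $A^2-I$ is invertible mod $2$ and $A^{2^{j}}$ collapses to the identity modulo too high a power of two, forcing early collisions; once this dichotomy is pinned down, the positive direction is the lifting induction above and the negative direction is parity counting together with remarks (i) and (ii).
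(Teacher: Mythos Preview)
Your proposal is essentially correct, and the positive direction is genuinely different from the paper's argument. The paper establishes the key congruence $u_{n+2^k}\equiv u_n+2^k\pmod{2^{k+1}}$ for the Lucas sequence first, via the Binet formula, a binomial expansion of $\alpha^{2^k}$ and $\beta^{2^k}$, and a lemma on $\nu_2\!\left(\binom{2^k}{\ell}2^{\ell}\right)$ derived from Kummer's theorem; this forces a separate treatment of the degenerate case $\Delta=0$. Only then is the general sequence handled by writing $w_n=au_n+bu_{n+1}$ with $a+b$ odd. Your companion-matrix induction $D_{j+1}=D_j(D_j+2I)$, together with the observation that $A^2-I\equiv 2\!\left(\begin{smallmatrix}1&1\\1&1\end{smallmatrix}\right)\pmod 4$ and that the all-ones matrix is nilpotent mod~$2$, gives $A^{2^j}-I=2^jC_j$ with $C_j\equiv\left(\begin{smallmatrix}1&1\\1&1\end{smallmatrix}\right)\pmod 2$ directly, with no Binet formula, no Kummer, and no $\Delta=0$ split. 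Combined with the parity invariant $w_m+w_{m+1}\equiv 1\pmod 2$, this yields the shift congruence for $w$ itself in one stroke; this is cleaner and more uniform than the paper's route.

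Two small points on the negative direction. First, the case $w_0+w_1$ even is over-argued: $w_0\equiv w_1\pmod 2$ already kills $k=1$, and your period analysis is unnecessary. Second, your claim that parity alone settles $k\ge 2$ whenever $p$ is odd is slightly too strong: in the $3$-periodic case $(p,q)\equiv(1,1)\pmod 2$ with parity pattern $0,1,1,\ldots$, the first four terms contain exactly two odd entries, so (ii) does not bite at $k=2$; it does at $k=3$, which is all the theorem asks. The paper sidesteps all of this by a brute-force check modulo~$8$ over $0\le a,b,p,q\le 7$, so your structural explanation of \emph{why} $(2,3)$ is singled out---namely that it is the unique class where $A^2-I$ is $2$ times a rank-one matrix mod~$2$, preventing $A^{2^j}$ from collapsing to $I$ modulo too high a power of two---is a genuine addition.
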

Representing the residue classes modulo 
$m$ by $\overline{a}$, with 
$0\le a\le m-1$, we can reformulate property
\eqref{condition} as saying that the
map from $\mathbb Z/m\mathbb Z$ to $\mathbb Z/m\mathbb Z$ given by $\overline{a}\mapsto \overline{u_a}$ is a \emph{permutation} for every $m$ that is a power of two.\hfil\break
\indent We next describe the binary recurrent sequences for which the discriminator has been characterized. They fall into two
families. Theorem \ref{main} shows at
a glance that for all of them \eqref{condition} is satisfied.\hfil\break
\par Family 1. In Faye et al.\ \cite{FLM}, and its continuation 
by Ciolan et al.\ \cite{CFLM}, the discriminator
${\mathcal D}_{{\bf U}(k)}(n)$ is studied, where the \emph{Shallit sequence} ${\bf U}(k)$ is given by ${\bf U}(k)=(U_n(k))_{n\ge 0}$ with $U_0(k)=0,~U_1(k)=1$ and 
$$
U_{n+2}(k)=(4k+2)U_{n+1}(k)-U_n(k)
$$
for all $n\ge 0$. By Theorem \ref{main}, we have
${\mathcal D}_{{\bf U}(k)}(2^e)=2^e$ for every $e\ge 1$.\hfil\break
\par Family 2.
Let $q\ge 5$ be a prime and put
$q^*=(-1)^{(q-1)/2}\cdot q$. 
The
sequence $u_q(1),u_q(2),\ldots,$ with 
$$u_q(j)=\frac{3^j-q^*(-1)^j}{4},$$
we call the \emph{Browkin-S{\u a}l{\u a}jan sequence} for $q$.
The sequence $u_q$ satisfies the
recursive relation $u_q(j)=2 u_q(j-1) + 3 u_q(j-2)$ for $j \geq 3,$ with initial 
values 
$$ 
u_q(1)=(3+q^*)/4 \quad {\text{\rm and}}\quad  u_q(2)=(9-q^*)/4.
$$ 
We denote its discriminator by ${\mathcal D}_q$.
In the context of the
discriminator, the sequence $u_5$ ($2, 1, 8, 19, 62, 181, 548, 1639, 4922,\ldots$) was first considered 
by Sabin S{\u a}l{\u a}jan during an internship carried out in 2012 under the guidance of Moree (for
this reason we call it the \emph{S{\u a}l{\u a}jan sequence}).
Disregarding signs this is sequence \seqnum{A084222}.
Moree and Zumalac\'arregui \cite{PA} 
determined ${\mathcal D}_5(n)$ (cf.\ Table 2).
\begin{theorem}
\label{mainAna}
Let $n\ge 1$ be an arbitrary integer. Let $e$ be the smallest integer such that
$2^e\ge n$ and $f$ be the smallest integer such that $5^f\ge 5n/4$.
Then ${\mathcal D}_5(n)=\min\{2^e,5^f\}$.
\end{theorem}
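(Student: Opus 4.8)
The plan is to prove the two inequalities $\mathcal D_5(n)\le\min\{2^e,5^f\}$ and $\mathcal D_5(n)\ge\min\{2^e,5^f\}$ separately, using throughout the identity $4u_5(j)=3^j-5(-1)^j$ (here $q^*=5$). For the upper bound, set $w_m:=u_5(m+1)$, so that $w_{m+2}=2w_{m+1}+3w_m$ with $(p\bmod4,q\bmod4)=(2,3)$ and $w_0+w_1=u_5(1)+u_5(2)=3$ odd; Theorem~\ref{main} then gives $\mathcal D_{\bf w}(2^k)=2^k$ for all $k\ge1$, so $u_5(1),\dots,u_5(2^e)$ are pairwise incongruent modulo $2^e$, and hence so are the first $n\le 2^e$ of them, proving $\mathcal D_5(n)\le 2^e$. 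For the modulus $5^f$ I argue directly: a coincidence $u_5(i)\equiv u_5(j)\pmod{5^f}$ with $i\not\equiv j\pmod2$ would give $3^i-3^j\equiv\pm10\pmod{5^f}$, which is impossible since $j-i$ is odd forces $3^{j-i}\not\equiv1\pmod5$ (the order of $3$ mod $5$ is $4$), so $3^i-3^j$ is a $5$-adic unit whereas $v_5(\pm10)=1$; and a coincidence with $i\equiv j\pmod2$ gives $3^i\equiv3^j\pmod{5^f}$, whence $4\cdot5^{f-1}\mid i-j$ because $3$ is a primitive root modulo every power of $5$. Thus $u_5(1),\dots,u_5(4\cdot5^{f-1})$ are pairwise incongruent mod $5^f$, and the hypothesis $5^f\ge5n/4$ is exactly $n\le4\cdot5^{f-1}$, so $\mathcal D_5(n)\le5^f$.

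The same computations give the two \emph{periodicity lemmas}: $u_5\bmod2^a$ is purely periodic of least period $2^a$ and injective on one period (for $a\ge1$), and $u_5\bmod5^b$ is purely periodic of least period $4\cdot5^{b-1}$ and injective on one period; equivalently $u_5(i)\equiv u_5(j)\pmod{2^a}\Leftrightarrow i\equiv j\pmod{2^a}$ and $u_5(i)\equiv u_5(j)\pmod{5^b}\Leftrightarrow i\equiv j\pmod{4\cdot5^{b-1}}$. Now suppose some $m<\min\{2^e,5^f\}$ discriminated $u_5(1),\dots,u_5(n)$. Then $m\ge n$, so $2^{e-1}<m<2^e$ and $v_5(m)\le f-1$, and $m$ is neither a power of $2$ (which would be $\le2^{e-1}<n$) nor a power of $5$ (a discriminating power of $5$ has exponent $\ge f$). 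Write $m=2^a5^bt$ with $\gcd(t,10)=1$ and put $L=\operatorname{lcm}(2^a,4\cdot5^{b-1})$ for $b\ge1$ and $L=2^a$ for $b=0$. By the CRT and the two lemmas, any coincidence $u_5(i)\equiv u_5(j)\pmod m$ forces $L\mid j-i$ together with $u_5(i)\equiv u_5(j)\pmod t$; when $L$ is even and $\gcd(t,3)=1$ the latter reads $3^i\equiv3^j\pmod t$, so the largest $N$ with $u_5(1),\dots,u_5(N)$ pairwise incongruent mod $m$ equals $\operatorname{lcm}(L,\operatorname{ord}_t(3))$, and when $3\mid m$ the (eventually $2$-periodic) behaviour of $u_5\bmod 3^{v_3(m)}$ only shortens this.

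It remains to show that $\operatorname{lcm}(L,\operatorname{ord}_t(3))<n$ in all cases with $(a,b)\ne(0,0)$, which produces a coincidence among the first $n$ terms and the desired contradiction. When $a\ge2$ (resp.\ $a=1$) one has $\operatorname{lcm}(L,\operatorname{ord}_t(3))\le L\phi(t)<m/5$ (resp.\ $<2m/5$), which lies below $2^{e-1}<n$ because $m<2^e$; when $a\le1,\ b\ge1$, or $b=0,\ a\ge1$, one uses in addition that $2\mid\gcd(L,\operatorname{ord}_t(3))$ if $\operatorname{ord}_t(3)$ is even and that $\operatorname{ord}_t(3)\le\phi(t)/2$ if it is odd (since $\phi(t)$ is even), obtaining $\operatorname{lcm}(L,\operatorname{ord}_t(3))\le\tfrac12L\phi(t)<m/2<2^{e-1}<n$; and when $t=1$ with $a,b\ge1$ the bound $L<n$ holds directly. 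Thus only the case $a=b=0$, i.e.\ $m=t$ a prime power coprime to $10$, survives; and among these $m=3^c$ is immediate, since $u_5\bmod3^c$ is $2$-periodic past index $c$, so its discriminating threshold is $\le c+1$, far below $n$ (as $3^c=m<2n$).

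The main obstacle is therefore the case $m=p^c$ with $p\ge7$ and $\operatorname{ord}_{p^c}(3)$ large (close to $\phi(p^c)$): then same-parity coincidences modulo $p^c$ occur only after a full long period, and to force a coincidence among the first $n$ terms one must instead exhibit an opposite-parity coincidence, i.e.\ $3^i-3^j\equiv\pm10\pmod{p^c}$ with $i<j\le n$ — equivalently, a quadratic residue $a$ with $a-10$ a non-residue occurring among the admissible powers of $3$. The delicate point is to locate such a coincidence \emph{below the threshold $n$} rather than merely below $m$, exploiting $n>m/2$ (which comes from $m<2^e$); I expect this to rest on a counting/pigeonhole estimate for differences of powers of $3$ in $\mathbb{Z}/p^c\mathbb{Z}$ (or a short character-sum bound), together with a finite check for the smallest primes $p$.
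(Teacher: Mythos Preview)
The paper does not prove Theorem~\ref{mainAna}: it is quoted from Moree and Zumalac\'arregui \cite{PA} as a motivating known result, and only the very special instance $\mathcal D_5(2^e)=2^e$ is a consequence of the paper's own Theorem~\ref{main}. So there is no proof in this paper to compare your attempt against.

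That said, your argument is genuinely incomplete, and you acknowledge as much in your final paragraph. The upper bound $\mathcal D_5(n)\le\min\{2^e,5^f\}$ is correct, and your case analysis for the lower bound does dispose of every candidate modulus $m<\min\{2^e,5^f\}$ with $\gcd(m,10)>1$ (modulo some untidy bookkeeping; for instance your clause ``when $a\ge2$ \dots $<m/5$'' tacitly assumes $b\ge1$). But the case $\gcd(m,10)=1$ is the crux, and there are two gaps. First, you pass without justification from ``$m$ coprime to $10$'' to ``$m$ a prime power coprime to $10$''. Second, and more seriously, for such $m$ --- say $m=p$ a prime in $(n,2n)$ with $3$ a primitive root modulo $p$ --- your same-parity mechanism yields nothing, since $\operatorname{ord}_m(3)=m-1\ge n$; one must instead exhibit an \emph{opposite-parity} coincidence, i.e.\ indices $1\le i<j\le n$ of different parities with $3^i-3^j\equiv\pm10\pmod m$. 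You offer no construction, only the hope that ``a counting/pigeonhole estimate or a short character-sum bound, together with a finite check'' will suffice. Naive pigeonhole does not: the relevant subsets of $\mathbb Z/m\mathbb Z$ have size roughly $m/4$ each and could a priori be disjoint. The published proof in \cite{PA} handles precisely this point with specific arguments that your sketch does not reproduce; you have correctly isolated the difficulty but not resolved it.
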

More recently Ciolan and Moree \cite{CM} completely 
characterized ${\mathcal D}_q$
for arbitrary primes $q>5$. Noting that $u_q(1)+u_q(2)=3$,
one sees that Theorem \ref{main} applies and hence
${\mathcal D}_q(2^e)=2^e$ for every $e\ge 1$.

In order to prove Theorem \ref{main}, we will deal
with the special case where ${\bf w}$ is a Lucas sequence first in 
Section \ref{secLucas}.
In the general case, we express ${\bf w}$ as
a linear combination of a Lucas and a shifted Lucas sequence (Section 
\ref{sec:general}). Our arguments require some consideration of the
two divisibility of binomial coefficients (Section \ref{secbinomial}).

Beyond the
polynomial and the recurrence
sequence case there is very little
known. Haque and Shallit 
\cite{HS} considered the
discriminator for $k$-regular
sequences. For these also 
property \eqref{condition} is satisfied. Sun \cite{Sun} made
some conjectures regarding the
discriminator for various
sequences.
\section{Preliminaries}
\label{secbinomial} 
We recall a celebrated result of Kummer, cf.\  Ribenboim \cite[pp. 30--33]{Ribenboim}.
\begin{theorem}[Kummer, 1852]
\label{Kummer}
Let $p$ be a prime number. The exponent of $p$ in ${n \choose m}$ is the number of base $p$ carries when summing $m$ with $n-m$ in base $p$. 
\end{theorem}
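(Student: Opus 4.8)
The plan is to derive Kummer's theorem from Legendre's formula for the $p$-adic valuation of a factorial. Write $v_p$ for the $p$-adic valuation and $s_p(N)$ for the sum of the base-$p$ digits of a non-negative integer $N$. The first step is to establish Legendre's identity
\[
v_p(N!)=\sum_{i\ge 1}\left\lfloor \frac{N}{p^i}\right\rfloor=\frac{N-s_p(N)}{p-1}.
\]
The first equality is immediate on counting, for each $i\ge 1$, how many of the integers $1,\dots,N$ are divisible by $p^i$. The second follows by writing $N=\sum_{j\ge 0} d_j p^{j}$ with $0\le d_j\le p-1$, substituting $\lfloor N/p^i\rfloor=\sum_{j\ge i}d_j p^{j-i}$, interchanging the order of summation, and evaluating the resulting finite geometric sums $\sum_{i=1}^{j}p^{j-i}=(p^{j}-1)/(p-1)$.

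The second step is purely formal. Since $\binom{n}{m}=n!/(m!\,(n-m)!)$, additivity of $v_p$ together with Legendre's identity gives
\[
v_p\!\binom{n}{m}=\frac{\bigl(n-s_p(n)\bigr)-\bigl(m-s_p(m)\bigr)-\bigl((n-m)-s_p(n-m)\bigr)}{p-1}=\frac{s_p(m)+s_p(n-m)-s_p(n)}{p-1},
\]
where the non-digit terms cancel because $n=m+(n-m)$.

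The third and final step, which I expect to carry whatever real content there is, is to identify the numerator above with $(p-1)$ times the number of carries. Performing the schoolbook addition of $m$ and $n-m$ in base $p$, let $a_j,b_j,e_j$ be the $j$-th base-$p$ digits of $m$, $n-m$, and $n$, and let $c_j\in\{0,1\}$ be the carry out of position $j$, with the convention $c_{-1}=0$. The addition algorithm asserts $a_j+b_j+c_{j-1}=e_j+p\,c_j$ for every $j\ge 0$. Summing over all $j$ (a finite sum, since all three numbers have finitely many nonzero digits) yields $s_p(m)+s_p(n-m)+\sum_{j\ge 0} c_{j-1}=s_p(n)+p\sum_{j\ge 0} c_j$. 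Because $m+(n-m)=n$, there is no carry out of the most significant position, so $\sum_{j\ge 0}c_{j-1}=\sum_{j\ge 0}c_j=:c$, the total number of carries, and the displayed relation collapses to $s_p(m)+s_p(n-m)-s_p(n)=(p-1)c$. Combining this with the previous display gives $v_p\binom{n}{m}=c$, which is exactly the assertion. The only subtlety is the bookkeeping of the index shift between $\sum_j c_{j-1}$ and $\sum_j c_j$, and it is precisely the vanishing of the leading carry that makes the two sums equal; everything else is routine.
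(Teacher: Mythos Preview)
Your proof is correct. The paper itself does not prove Kummer's theorem at all: it merely states it as a classical result and refers the reader to Ribenboim's book, so there is no argument of the paper's to compare against. Your derivation via Legendre's formula and the digit-sum/carry identity is the standard one and is carried out cleanly. One tiny remark on the bookkeeping in the last step: the equality $\sum_{j\ge 0}c_{j-1}=\sum_{j\ge 0}c_j$ really uses both that $c_{-1}=0$ (your convention) and that there is no carry out of the top position; you attribute it only to the latter, but since both facts hold this is harmless.
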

Here and in what follows we write $\nu_2(a)$ for the exponent of $2$ in the factorization of 
the integer $a$. 
\begin{lemma}
\label{binomial}
We have 
$$
\nu_2\left({{\ell}\choose{k}} 2^{3k}\right)>\nu_2(2\ell)
$$
for all $k\ge 1$. Further, 
$$
\nu_2\left({{2^k}\choose{\ell}} 2^{\ell}\right)\ge k+3
$$
for $\ell=3$ and $\ell\ge 5$. 
\end{lemma}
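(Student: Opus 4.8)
The plan is to reduce both inequalities to elementary facts about $\nu_2$, using Kummer's theorem (Theorem~\ref{Kummer}) together with the absorption identity $k\binom{\ell}{k}=\ell\binom{\ell-1}{k-1}$, valid for $k\ge 1$. In both parts I may assume the relevant binomial coefficient is nonzero, since otherwise its $2$-adic valuation is $+\infty$ and the inequality is trivial; thus for the first claim I take $1\le k\le\ell$ and for the second $1\le\ell\le 2^k$.

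For the first assertion, I would rewrite the identity as $\nu_2\binom{\ell}{k}=\nu_2(\ell)-\nu_2(k)+\nu_2\binom{\ell-1}{k-1}\ge\nu_2(\ell)-\nu_2(k)$, so that $\nu_2\!\left(\binom{\ell}{k}2^{3k}\right)\ge\nu_2(\ell)-\nu_2(k)+3k$. Since $\nu_2(2\ell)=\nu_2(\ell)+1$, it then suffices to observe that $3k-\nu_2(k)\ge 2$ for every $k\ge1$; this is immediate from $2^{\nu_2(k)}\mid k$, which forces $\nu_2(k)\le\log_2 k\le k-1$, hence $3k-\nu_2(k)\ge 2k+1\ge3$.

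For the second assertion, I would apply the same identity to $\binom{2^k}{\ell}$, obtaining $\nu_2(\ell)+\nu_2\binom{2^k}{\ell}=k+\nu_2\binom{2^k-1}{\ell-1}$, and then invoke the fact that $\binom{2^k-1}{j}$ is odd for all $0\le j\le 2^k-1$. The quickest route to this fact: $(2^k-1)-j$ is the digitwise complement of $j$ in base $2$ (within $k$ digits), so the binary digit sums satisfy $s_2(j)+s_2\big((2^k-1)-j\big)=k=s_2(2^k-1)$; by Kummer (equivalently the base-$2$ digit-sum formula for carries) there are then no carries when adding $j$ and $(2^k-1)-j$, i.e.\ $\nu_2\binom{2^k-1}{j}=0$. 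Consequently $\nu_2\binom{2^k}{\ell}=k-\nu_2(\ell)$, and the desired inequality $\nu_2\!\left(\binom{2^k}{\ell}2^{\ell}\right)\ge k+3$ becomes precisely $\ell-\nu_2(\ell)\ge3$.

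Finally I would verify $\ell-\nu_2(\ell)\ge3$ for $\ell=3$ and $\ell\ge5$: the few small cases $\ell\in\{3,5,6,7\}$ directly (with $\nu_2$ equal to $0,0,1,0$), and $\ell\ge8$ from $\nu_2(\ell)\le\log_2\ell\le\ell-3$, the function $x-3-\log_2 x$ being increasing for $x\ge2$ and nonnegative at $x=8$. The lemma is elementary, so I do not anticipate a real obstacle; the only points that need a moment's care are the degenerate cases where a binomial coefficient vanishes, the exact evaluation $\nu_2\binom{2^k}{\ell}=k-\nu_2(\ell)$ (this is where the "$\binom{2^k-1}{j}$ is odd" input is used), and the observation that $\ell\in\{1,2,4\}$ are genuine exceptions to the second bound.
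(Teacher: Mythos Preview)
Your proof is correct. The approach differs from the paper's in a pleasant way: where the paper argues both parts by direct carry counting via Kummer's theorem and then splits into several cases (on the size of $k$ and on $\ell_0=\nu_2(\ell)$), you instead use the absorption identity $k\binom{\ell}{k}=\ell\binom{\ell-1}{k-1}$ as the main engine. For the first inequality this immediately gives $\nu_2\binom{\ell}{k}\ge\nu_2(\ell)-\nu_2(k)$ and collapses the argument to the single elementary bound $3k-\nu_2(k)\ge 2$, bypassing the paper's case split. For the second inequality both routes land on the same exact formula $\nu_2\binom{2^k}{\ell}=k-\nu_2(\ell)$, reducing the claim to $\ell-\nu_2(\ell)\ge 3$; the paper derives this formula by counting carries in $\ell+(2^k-\ell)$, while you get it from the absorption identity together with the classical fact that $\binom{2^k-1}{j}$ is always odd. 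Your treatment is a little more uniform and avoids the somewhat ad hoc case analysis; the paper's direct carry count has the minor advantage that it never needs the auxiliary oddness of $\binom{2^k-1}{j}$.
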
 

\begin{proof}
We use Theorem \ref{Kummer} with $p=2$.
For the first inequality, we note that it is clear for $k=1$, so we may assume that $k\ge 2$. Write $\ell=2^{\ell_0}\ell_1$ with integers $\ell_0\ge 0$ and $\ell_1$ odd. The inequality is clear for 
$\ell_0\le 1$. It is also clear if $k>(\ell_0+1)/3$. So, we may assume that $k\le (\ell_0+1)/3$. Write $k=2^{k_0}k_1$, where $k_0\ge 0$ and $k_1$ is odd. Then $k_0<k\le (\ell_0+1)/3<\ell_0$.
It follows that by summing up $k$ with $\ell-k$, we have at least $\ell_0-k_0$ carries in base $2$. Thus, 
$$
\nu_2\left({{\ell}\choose{k}} 2^{3k}\right)\ge (\ell_0-k_0)+3k>\ell_0+2k\ge \ell_0+2,
$$
which is what we wanted to prove. 

We will now prove the second inequality. Assume first that $\ell \in [3,2^k-1]$. Then the number of carries from summing up $\ell$ with $2^k-\ell$ is, by the previous argument, $k-\ell_0$, where again $\ell=2^{\ell_0}\ell_1$ 
with $\ell_1$ odd. Hence,
$$
\nu_2\left({{2^k}\choose{\ell}} 2^{\ell}\right)=k-\ell_0+\ell.
$$
This is at least $k+3$ if $\ell\ge 3$ is odd (since then $\ell_0=0$). It is also at least $k-\ell_0+2^{\ell_0}>k+3$ if $\ell_0\ge 3$. If $\ell_0=1$, then $\ell>4$ so $k-\ell_0+\ell\ge k+3$. Finally, if $\ell_0=2$, then since $\ell\ne 4$, we have
$\ell\ge 8$ (since $4\mid \ell$), so the above expression is at least $k-2+8>k+3$. This was for $\ell<2^k$. Finally, when $\ell=2^k$, we have 
$$
\nu_2\left({{2^k}\choose{\ell}} 2^{\ell}\right)=2^k> k+3
$$
because $k\ge 3$. 
\end{proof}
\section{The Lucas sequence}
\label{secLucas}
A basic role in the theory of binary recurrent sequences is played by Lucas sequences.
\begin{theorem}
\label{thmLucas}
Let $(u_n)_{n\ge 0}$ be a Lucas sequence with $u_0=0,~u_1=1$ and 
$$u_{n+2}=pu_{n+1}+qu_n,\quad {\text{\rm for~all}}\, \, n\ge 0.$$ Then ${\mathcal D}_{\bf u}(2^k)=2^k$ for all $k\ge 1$ if and only if 
if $(p\bmod{4},q\bmod{4})=(2,3)$.
\end{theorem}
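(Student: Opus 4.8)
The plan is to analyze the sequence $(u_n \bmod 2^k)$ directly, splitting into cases according to the parity of $p$ and $q$. First observe that $q$ must be odd: if $q$ is even, then $u_2 = p u_1 + q u_0 = p$, and one checks that $u_1 + u_2 = 1 + p$, so the condition ``$w_0 + w_1$ even'' from Theorem \ref{main} would be violated only if $p$ is odd, but in that case $q$ even forces (looking one step further, $u_3 = p u_2 + q u_1 = p^2 + q$) a collision modulo $4$ already; more cleanly, if $q$ is even then $u_n$ is eventually determined mod $2$ by $p u_{n+1}$, and the sequence of parities is eventually periodic with a short period, producing repeats among $u_0, \ldots, u_{2^k-1}$ for $k$ large. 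Similarly $p$ must be even: if $p$ is odd and $q$ is odd, then modulo $2$ the recurrence is $u_{n+2} \equiv u_{n+1} + u_n$, which is the Fibonacci recurrence mod $2$, periodic $0,1,1,0,1,1,\ldots$ of period $3$, so $u_0 \equiv u_3 \pmod 2$ is not yet a contradiction, but then modulo $4$ one runs the recurrence and finds a repeated value well before $2^k$ terms for $k \ge 2$ (this is the ``$(p\bmod 4, q \bmod 4) \ne (2,3)$'' branch specialized to the Lucas case, and here one should invoke or re-derive the relevant part of the argument). So the only surviving case for the ``if'' direction is $p$ even, $q$ odd.

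For the ``if'' direction, assume $(p \bmod 4, q \bmod 4) = (2,3)$ and fix $k \ge 1$. I would show that the map $\overline{a} \mapsto \overline{u_a}$ on $\mathbb{Z}/2^k\mathbb{Z}$ is a bijection by proving it is injective: suppose $u_i \equiv u_j \pmod{2^k}$ with $0 \le i < j \le 2^k - 1$. The key tool is a closed-form or telescoping expansion of $u_n$ in powers of $p$: since $p \equiv 2 \pmod 4$, write $p = 2p'$ with $p'$ odd, and expand $u_n$ as a polynomial in $p$ with coefficients built from $q$. One has $u_n = \sum_{i} \binom{n-1-i}{i} q^i p^{n-1-2i}$ (the standard expansion of the Lucas sequence), so modulo a power of $2$ only the low-degree-in-$p$ terms survive, and $\nu_2(p^{n-1-2i}) = (n-1-2i)(1 + \nu_2(p'))$ grows. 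This is exactly where Lemma \ref{binomial} enters: the bound $\nu_2\!\left(\binom{\ell}{k} 2^{3k}\right) > \nu_2(2\ell)$ controls the $2$-adic size of the binomial-times-power-of-$p$ terms, letting one reduce $u_n \bmod 2^k$ to a short, explicitly controlled expression — essentially $u_n \equiv n q^{(n-1)/?}\cdot(\text{unit}) + (\text{higher } 2\text{-adic order})$ after extracting the dominant term. From such a reduction, $u_i \equiv u_j$ forces $2^k \mid (j - i) \cdot (\text{odd})$, hence $2^k \mid j - i$, contradicting $0 < j - i < 2^k$. One also needs the companion fact $\mathcal{D}_{\bf u}(2^k) \le 2^k$, i.e. that no smaller modulus works; but $\mathcal{D}_{\bf u}(n) \ge n$ always, so $\mathcal{D}_{\bf u}(2^k) = 2^k$ follows once injectivity mod $2^k$ is established together with the observation that $u_0, u_1 = 0, 1$ already force any discriminating modulus to be $\ge 2^k$ once $2^k$ of the terms are involved — more precisely, injectivity mod $2^k$ gives $\mathcal{D} \le 2^k$ and the general lower bound gives $\mathcal{D} \ge 2^k$.

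For the ``only if'' direction one argues contrapositively, which is the cleaner half: if $(p \bmod 4, q \bmod 4) \ne (2,3)$, one produces an explicit collision. When $q$ is even or $p$ is odd, the parity/mod-$4$ arguments sketched above give collisions for all sufficiently large $k$ (and one checks small $k$ by hand). When $p \equiv 0 \pmod 4$ and $q$ odd, the expansion shows $u_n$ depends on $n$ only through very high powers of $p$, so already modulo $8$ (using $k = 3$) the values $u_0, \ldots, u_7$ collapse; when $p \equiv 2 \pmod 4$ but $q \equiv 1 \pmod 4$, a similar refined expansion modulo $8$ (now using the second inequality of Lemma \ref{binomial}, $\nu_2\!\left(\binom{2^k}{\ell} 2^\ell\right) \ge k + 3$) produces the collision. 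The main obstacle will be the ``if'' direction: getting the $2$-adic expansion of $u_n$ into a form clean enough that injectivity modulo $2^k$ is visible requires carefully bookkeeping which binomial-coefficient terms are negligible, and Lemma \ref{binomial} is precisely calibrated for this, so the real work is matching the expansion to the hypotheses of that lemma and tracking the unique surviving ``linear in $n$'' term whose odd cofactor makes the injectivity argument go through.
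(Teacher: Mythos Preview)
Your ``if'' direction has a genuine gap. The target reduction you aim for, $u_n \equiv n\cdot(\text{odd unit}) + (\text{higher }2\text{-adic order}) \pmod{2^k}$, is simply false: take $p=2$, $q=3$, so that $(u_0,\ldots,u_7)=(0,1,2,7,20,61,182,547)$, which modulo~$8$ reads $(0,1,2,7,4,5,6,3)$; no single odd $c$ satisfies $u_n\equiv cn\pmod 8$ (from $u_1$ one would need $c=1$, but then $u_3\equiv 3\not\equiv 7$). More structurally, the terms in your expansion $u_n=\sum_i \binom{n-1-i}{i}q^i p^{n-1-2i}$ carry $2$-adic weight $\nu_2\bigl(\binom{n-1-i}{i}\bigr)+(n-1-2i)$, and these do not match the shape $\nu_2\bigl(\binom{\ell}{k}2^{3k}\bigr)$ of the first inequality in Lemma~\ref{binomial}; your appeal to that lemma is not justified, and the literal ``$?$'' in your exponent signals that you have not actually found the surviving term. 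You also invoke the second inequality of Lemma~\ref{binomial} for the ``only if'' direction, but in the paper it is used for the ``if'' direction.

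The paper's argument is organized quite differently. For the ``only if'' direction it looks at $\{u_0,u_1,u_2,u_3\}=\{0,1,p,p^2+q\}$ modulo~$4$, which forces $(p\bmod 4,q\bmod 4)\in\{(2,3),(3,1)\}$, and then rules out $(3,1)$ by a finite check modulo~$8$. For the ``if'' direction it does \emph{not} try to write $u_n$ explicitly modulo $2^k$; instead it proves the shift relation $u_{n+2^k}\equiv u_n+2^k\pmod{2^{k+1}}$, from which the permutation property follows by induction on $k$. This requires splitting into the degenerate case $\Delta=p^2+4q=0$ (where $u_n=n(p/2)^{n-1}$ and the \emph{first} inequality of Lemma~\ref{binomial} yields $\nu_2(u_m-u_n)=\nu_2(m-n)$) and the non-degenerate case (where one expands $\alpha^{2^k}$ and $\beta^{2^k}$ via the Binet formula, using the \emph{second} inequality of Lemma~\ref{binomial} to discard all but four terms, together with $2\,\|\,v_n$ for the companion sequence). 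You omit this case split entirely, and your combinatorial expansion does not recover the shift relation.
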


\begin{proof}
We look at $\{u_0,u_1,u_2,u_3\}=\{0,1,p,p^2+q\}$. Since these are all the residues modulo $4$, it follows that either $(p\bmod 4, q\bmod 4)=(2,3)$ or $(p\bmod 4, q\bmod 4)=(3,1)$. The second possibility entails 
$(p\bmod 8, q\bmod 8)\in \{(3,1),~(7,1),~(3,5),~(7,5)\}$ and one checks computationally that none  of these $4$ possibilities gives that $\{u_k\,({\rm mod~}8): 0\le k\le 7\}$ covers all residue classes modulo $8$. 
Thus, we must have $(p\bmod 4, q\bmod 4)=(2,3)$. 

We consider the quadratic polynomial $x^2-px-q$ 
having discriminant $\Delta=p^2+4q$. The 
equation $x^2-px-q=0$ is the characteristic equation
for the Lucas sequence. We consider 
the cases $\Delta=0$ and $\Delta\ne 0$ separately.

The degenerate case ($\Delta=0$). 
In this case $u_n=np_0^{n-1}$ with 
$p_0=p/2$. We have 
$\{u_0,u_1,u_2,u_3\}=\{0,1,2p_0,3p_0^2\}$ and 
since $p_0$ is odd,
these are distinct modulo $4$. We claim that 
$\nu_2(u_m-u_n)=\nu_2(m-n)$ 
for $m>n$.
Notice that this claim implies \eqref{condition}.

We have $u_m-u_n\equiv m-n\,({\rm mod~}2)$.
So $\nu_2(u_m-u_n)=0$ if and only if $\nu_2(m-n)=0$.
 Next assume that $m\equiv n\,({\rm mod~}2)$. 
Write $m=n+2\ell$. Then 
\begin{equation}
\label{difference}
u_m-u_n=(n+2\ell)p_0^{n+2\ell-1}-np_0^{n-1}=(n+2\ell)p_0^{n-1}(p_0^{2\ell}-1)+2\ell p_0^{n-1}.
\end{equation}
We can write 
$p_0^2=1+8p_1$ with $p_1$ an integer.
Thus,
$$
p_0^{2\ell}=(1+8p_1)^{\ell}=1+8\ell p_1+{{\ell}\choose{2}} (8p_1)^2+\cdots.
$$
This in combination with \eqref{difference} leads to 
$$
u_m-u_n = p_0^{n-1}\left(2\ell+\sum_{k\ge 1} (n+2\ell) {{\ell}\choose{k}} (8p_1)^k\right).
$$
Since by Lemma \ref{binomial}  for every $k\ge 1$ we have 
$$
\nu_2\left({{\ell}\choose {k}} (8p_1)^k\right)> \nu_2(2\ell),
$$
we conclude that
$$
\nu_2(u_m-u_n)=\nu_2(2\ell)=\nu_2(m-n),
$$
thus establishing the claim. 

The non-degenerate case ($\Delta\ne 0$). Since $p=2p_0$ and $q\bmod 4=3$, it follows that $\Delta=4(p_0^2+q)=16\Delta_0$, where $\Delta_0$ is an integer.  Let
$$
\alpha=p_0+2{\sqrt{\Delta_0}}\qquad {\text{\rm and}}\qquad  \beta=p_0-2{\sqrt{\Delta_0}}
$$
be the roots of $x^2-px-q=0$. 
The Binet formula for $u_n$ is 
\begin{equation}
\label{eq11}
u_n=\frac{\alpha^n-\beta^n}{\alpha-\beta}.
\end{equation}
While not necessary for this proof, we take a detour and prove a property concerning the index of appearance of powers of $2$. In the course of proving it, we will show that $2\| v_n$, with 
$(v_n)_{n\ge 0}$ the companion sequence of our Lucas sequence. This fact we actually do need in
our proof.

For a positive integer $m$ let $z(m)$ be the order of appearance of $m$ in the sequence $(u_n)_{n\ge 0}$. It is the minimal positive integer $k$ such that 
$m\mid u_k$. It is known 
(see Bilu et al.\ \cite{Bilu}) that this exists for all $m$ which are coprime to $q$. Further, $m\mid u_n$ if and only if $z(m)\mid n$. For us, $z(2)=2$ since $p\equiv 2\,({\rm mod~}4)$ and $z(4)=4$. 
It follows easily by induction on $k$ that 
$$
z(2^k)=2^k.
$$
One way to see this is to introduce the companion sequence $(v_n)_{n\ge 0}$ given by $v_0=2,~v_1=p$ and $v_{n+2}=pv_{n+1}+qv_n$ for all $n\ge 0$. By induction, we get that $2\| v_n$ 
for all $n\ge 0$. The Binet formula for 
$v_n$ is 
\begin{equation}
\label{eq12}
v_n=\alpha^n+\beta^n\quad {\text{\rm for~all~}}n\ge 0.
\end{equation}
We have $u_{2n}=u_nv_n$ by the Binet formulas \eqref{eq11} and \eqref{eq12}. We are now ready to show that $z(2^k)=2^k$. Assume that $k\ge 3$ and that $2^k\mid u_n$. This implies that $n=2^{\ell}n_1$ 
for some integers $\ell$ and $n$, 
with $\ell\ge 2$ and $n_1$ odd. Now we use repeatedly the formula 
$u_{2m}=u_mv_m$ for $m=n/2,~n/4,\ldots,$ resulting in
$$
u_n=u_{2^\ell n_1}=v_{2^{\ell-1}n_1}v_{2^{\ell-2} n_1}\cdots v_{n_1} u_{n_1}.
$$
Since $v_{2^i n_1}\equiv 2\,({\rm mod~}4)$ for $i=0,1,\ldots,\ell-1$ 
and $u_{n_1}$ is odd, we infer that 
$$
\nu_2(u_{2^{\ell} n_1})=\ell.
$$
It follows that $k\ge \ell$. In particular, $z(2^k)=2^k$. 

Next we show that 
\begin{equation}
\label{twotopowerkshift}
u_{n+2^k}\equiv u_n+2^k\,({\rm mod~}{2^{k+1}})
\end{equation}
for all $k\ge 1$. One checks it easily by hand for $k=1$ and $n=0,1$ as well as for $k=2$ and $n=0,1,2,3$. Assume next $k\ge 3$. In what follows, for three algebraic integers $a,b,c$, we write $a\equiv b\,({\rm mod~}c)$ if 
$(a-b)/c$ is an algebraic integer. 
We have 
\begin{align*}
\alpha^{2^k}  = (p_0+2{\sqrt{\Delta_0}})^{2^k}=& p_0^{2^k}+2^k p_0^{2^k-1}(2{\sqrt{\Delta_0}})+{{2^k}\choose {2}} p_0^{2^k-2} (2{\sqrt{\Delta_0}})^2\\
 + & {{2^k}\choose {4}} p_0^{2^k-4} (2{\sqrt{\Delta_0}})^4+\sum_{\substack{\ell\ge 3\\ \ell\ne 4}} {{2^k}\choose {\ell}} p_0^{2^k-\ell} (2{\sqrt{\Delta_0}})^{\ell}.
\end{align*}
Then, by Lemma \ref{binomial},
$$
\alpha^{2^k}\equiv p_0^{2^k}+2^k p_0^{2^k-1}(2{\sqrt{\Delta_0}})+{{2^k}\choose {2}} p_0^{2^k-2} (2{\sqrt{\Delta_0}})^2+{{2^k}\choose{4}} p_0^{2^k-1} (2{\sqrt{\Delta_0}})^4 \,({\rm mod~}{2^{k+3}{\sqrt{\Delta_0}}}).
$$
Changing $\alpha$ to $\beta$, the same calculation yields
$$
\beta^{2^k}\equiv p_0^{2^k}-2^k p_0^{2^k-1}(2{\sqrt{\Delta_0}})+{{2^k}\choose{2}} p_0^{2^k-2} (2{\sqrt{\Delta_0}})^2 +{{2^k}\choose{4}} p_0^{2^k-4} (2{\sqrt{\Delta_0}})^4\,({\rm mod~}{2^{k+3}{\sqrt{\Delta_0}}}).
$$
Thus,
\begin{align*}
\alpha^{n+2^k}-\beta^{n+2^k} & \equiv \alpha^n\left(p_0^{2^k}+2^k p_0^{2^k-1}(2{\sqrt{\Delta_0}})+{{2^k}\choose {2}} p_0^{2^k-2} (2{\sqrt{\Delta_0}})^2+{{2^k}\choose {4}} p_0^{2^k-4} (2{\sqrt{\Delta_0}})^4\right)\\
& -  \beta^n\left(p_0^{2^k}-2^k p_0^{2^k-1}(2{\sqrt{\Delta_0}})+{{2^k}\choose{2}} p_0^{2^k-2} (2{\sqrt{\Delta_0}})^2+{{2^k}\choose{4}} p_0^{2^k-4} (2{\sqrt{\Delta_0}})^4\right)\\
& \equiv  p_0^{2k}(\alpha^n-\beta^n)+2^k p_0^{2^k-1} (2{\sqrt{\Delta_0}}) (\alpha^n+\beta^n)\\
& + {{2^k}\choose{2}} p_0^{2^k-2} (2{\sqrt{\Delta_0}})^2 (\alpha^n-\beta^n) \\
& +  {{2^k}\choose{4}} p_0^{2^k-4} (2{\sqrt{\Delta_0}})^4(\alpha^n-\beta^n)\,({\rm mod~}{2^{k+3}{\sqrt{\Delta_0}}}).
\end{align*}
Dividing across by $\alpha-\beta$ (which is equal to 
$4{\sqrt{\Delta_0}}$), we obtain
\begin{equation}
\begin{aligned}
\label{eq101}
u_{n+2^k} &\equiv p_0^{2^k} u_n+2^k p_0^{2^k-1} (v_n/2)+{{2^k}\choose{2}} p_0^{2^k-2} (4\Delta_0) u_n\\
& + {{2^k}\choose{4}} p_0^{2^k-4} (16\Delta_0^2) u_n\,({\rm mod~}{2^{k+1}}).
\end{aligned}
\end{equation}
We have $p_0^{2^k}\equiv 1\,({\rm mod~}{2^{k+1}})$ 
and $v_n/2 \equiv 1\,({\rm mod~}2)$. Finally, 
$$
{{2^k}\choose{2}} p_0^{2^k-2} (4\Delta_0)=2^{k+1}(2^k-1)p_0^{2^k-2} \Delta_0\equiv 0\,({\rm mod~}{2^{k+1}}),
$$
and also
$$
{2^k\choose 4} p_0^{2^k-4} (16\Delta_0^2)=\frac{2^{k-2}(2^k-1)(2^{k-1}-1)(2^k-3)}{3} 2^4\Delta_0^2\equiv 0\,({\rm mod~}{2^{k+1}}).
$$
We thus get from \eqref{eq101} that \eqref{twotopowerkshift}
holds for all $k\ge 1$. This implies by induction on $k$ that 
${\mathcal D}_{\bf u}(2^k)=2^k$. 
\end{proof}

\section{The general case: the proof of Theorem \ref{main}}
\label{sec:general}

In the previous section we dealt with the Lucas sequence (Theorem \ref{thmLucas}). 
We will make crucial use of that result in order to deal with a more general 
recurrence $(w_n)_{n\ge 0}$ as in \eqref{eq1}.

\begin{proof}[Proof of Theorem \ref{main}]
If $\#\{w_n\,({\rm mod~}{2^k}): 0\le n\le 2^k-1\}=2^k$ for all $k$, it holds for $k=1$ in particular. Thus, $w_0,~w_1$ have different parities which is equivalent to $w_0+w_1$ being odd. This proves the first assertion. Conversely, write 
$$
w_n=a u_n+bu_{n+1}.
$$
Note that $a u_n+bu_{n+1}$ satisfies the same recurrence 
relation as $w_n$.
On setting $n=0$, respectively $n=1$, we find $b=w_0$ 
and $a=w_1-pw_0$. Thus, $a+b=(w_1+w_0)-pw_0$ is odd. 
By \eqref{twotopowerkshift}, we obtain
\begin{align*}
w_{n+2^k} & =  au_{n+2^k}+bu_{n+1+2^k}\equiv  a(u_n+2^k)+b(u_{n+1}+2^k)\\
& \equiv   (au_n+bu_{n+1})+(a+b)2^k \equiv  w_n+2^k
\,({\rm mod~}{2^{k+1}})
\end{align*}
for $k\ge 1$. This shows that ${\mathcal D}_{\bf w}(2^k)=2^k$ for every $k\ge 1$.

It remains to prove the final assertion. 
Note that it is enough to prove it for $k=3$.
This can be done by doing a computer calculation modulo $8$. We consider all integers $a,b,p,q$ with
$0\le a,b,p,q\le 7$ and compute $\#\{w_n\,({\rm mod~}8): 0\le n\le 7\}$. It turns out that if $(p\bmod 4,q\bmod 4)\neq (2,3)$, 
then this cardinality is $<8$.
\end{proof}

\section{Tables}
We tabulate the discriminator for a sequence that does not (Fibonacci
sequence) and
a sequence that does (S{\u a}l{\u a}jan sequence) satisfy the conditions of Theorem \ref{main}.
We give the prime factorization of the values. Note the big difference in behavior.

\bgroup
\def\arraystretch{1.05}
\begin{table}[h]
\begin{center}
\begin{tabular}{|c|c|c|c|c|c|}
\hline
$n$ & $D_F(n)$ & $n$ & $D_F(n)$ & $n$ & $D_F(n)$ \\
\hline\hline
$1$ & $1$ & $21 - 24$ & $59$ & $69 - 80$ & $431$ \\
\hline
$2$ & $2$ & $25 - 26$ & $79$ & $81 - 113$ & $3 \cdot 197$ \\
\hline
$3$ & $3$ & $27 - 32$ & $83$ & $114 - 115$ & $3 \cdot 283$ \\
\hline
$4$ & $5$ & $33 - 35$ & $2^{3} \cdot 3 \cdot 5$ & $116 - 152$ & $1039$ \\
\hline
$5$ & $2^{3}$ & $36 - 39$ & $157$ & $153 - 158$ & $5 \cdot 13 \cdot 17$ \\
\hline
$6$ & $3^{2}$ & $40 - 44$ & $173$ & $159 - 162$ & $1171$ \\
\hline
$7 - 8$ & $2 \cdot 7$ & $45 - 55$ & $193$ & $163 - 166$ & $1451$ \\
\hline
$9 - 11$ & $3 \cdot 5$ & $56 - 59$ & $311$ & $167 - 184$ & $3 \cdot 487$ \\
\hline
$12 - 16$ & $2 \cdot 3 \cdot 5$ & $60 - 64$ & $337$ & $185 - 208$ & $1609$ \\
\hline
$17 - 20$ & $5 \cdot 7$ & $65 - 68$ & $409$ & $209 - 281$ & $3 \cdot 761$ \\
\hline
\end{tabular}
\end{center}
\medskip
\caption{\seqnum{A270151}: Discriminator for the Fibonacci sequence
$1,2,3,5,8,13,\ldots$}
\end{table}
\egroup

\bgroup
\begin{table}[h]
\begin{center}
\begin{tabular}{|c|c|c|c|}
\hline
$n$ & $D_S(n)$ & $n$ & $D_S(n)$ \\
\hline\hline
$1$ & $1$ & $129-256$ & $2^8$\\
\hline
$2$ & $2$ & $257-512$ & $2^9$\\
\hline
$3-4$ & $2^2$ & $513-1024$ & $2^{10}$\\
\hline
$5-8$ & $2^3$ & $1025-2048$ & $2^{11}$\\
\hline
$9-16$ & $2^4$ & $2049-2500$ & $5^5$\\
\hline
$17-20$ & $5^2$ & $2501-4096$ & $2^{12}$\\
\hline
$21-32$ & $2^5$ & $4097-8192$ & $2^{13}$\\
\hline
$33-64$ & $2^6$ & $8193-12500$ & $5^6$\\
\hline
$65-100$ & $5^3$ & $12501-16384$ & $2^{14}$\\
\hline
$101-128$ & $2^7$ & $16385-32768$ & $2^{15}$\\
\hline
\end{tabular}
\end{center}
\caption{Discriminator for the S{\u a}l{\u a}jan sequence  $2, 1, 8, 19, 62, 181,\ldots$}
\end{table}
\egroup
Table 2 demonstrates Theorem \ref{mainAna}.

\vfil\eject
\section{Acknowledgments} The problem of characterizing the binary recurrences ${\bf w}$ such that
${\mathcal D}_{\bf w}(2^e)=2^e$ for every $e\ge 1$ was posed by Moree to many interns. Eventually it was 
solved, in essence, independently by de Clercq, Matthis, and Weiss 
(interns in 2019) and Stoumen (a student of Martirosyan). Their work was 
completed by Luca. Ciolan kindly commented on an earlier version.

The authors, except Stoumen who has not visited the Max Planck Institute for Mathematics (yet!), are grateful for the pleasant working conditions and inspiring research atmosphere.

{\it 2010 Mathematics Subject Classification}: Primary 11B50, Secondary 11A07, 11B39.

\noindent Keywords: congruence, discriminator, binary recurrent sequence, Lucas sequence, Binet formula, permutation.

\noindent (Concerned with sequences \seqnum{A084222} and \seqnum{A270151}.)
\end{document}